\documentclass[10,5pt,a4paper]{article}
\usepackage{etex} 
\usepackage[latin1]{inputenc}
\usepackage[T1]{fontenc}
\usepackage[right=3cm,left=3cm,top=1cm,bottom=1cm]{geometry}
\usepackage{pstricks}
\usepackage{enumerate}
\usepackage{amsmath}
\usepackage{nccmath} 
\usepackage{amsthm}
\usepackage{amsfonts}
\usepackage{amssymb} 
\usepackage[np]{numprint}
\usepackage{fourier}

\npdecimalsign{\ensuremath{.}}

\title{A note on the $q$-adic valuation of $\sigma_k(n)$}
\author{Olivier Bordell\`{e}s}
\date{}

\newcommand{\Z}{\mathbb {Z}}

\newcommand{\sdfrac}[2]{\mbox{\small$\displaystyle\frac{#1}{#2}$}}

\let\oldqedhere\qedhere
\renewcommand{\qedhere}{\pushQED{\qed}\oldqedhere}

\DeclareMathOperator{\md}{mod}
\DeclareMathOperator{\ord}{ord}

\theoremstyle{theorem}
\newtheorem{theorem}{Theorem}
\newtheorem{lem}[theorem]{Lemma}
\newtheorem{coro}[theorem]{Corollary}

\theoremstyle{definition}
\newtheorem{exe}[theorem]{Example}

\theoremstyle{remark}

\providecommand{\keywordsubject}[1]{\textbf{2020 Mathematics Subject Classification :} \: #1}
\providecommand{\keywords}[1]{\textbf{Keywords :} #1}

\begin{document}

\maketitle

\footnote{\keywordsubject{11A25, 11A41, 11B50, 11B83}}

\footnote{\keywords{Sum of divisors, valuation, cyclotomic polynomials}}

\begin{abstract} 
In this note, we obtain an exact formula for the $q$-adic valuation of $\sigma_k(n)$ where $q$ is an odd prime, allowing us to derive an explicit upper bound which is asymptotically better than the previous bound obtained by Zhao when $n$ is large and $k \geqslant q-2$. The key parts are played by the LTE lemma and the use of cyclotomic polynomials.
\end{abstract}

\section{Introduction}

\subsection{Main results}

\noindent
Let $q$ be a prime number and $k \in \Z_{\geqslant 1}$. In recent years, several papers have appeared dealing with the $q$-adic valuation of $\sigma_k(n) := \sum_{d \mid n} d^{k}$. For instance, when $k=1$ and $q=2$, the authors in \cite{amd21} proved that $v_2(\sigma(n)) \leqslant \left \lceil \frac{\log n}{\log 2} \right \rceil$, and also obtained conditional upper bounds for $v_q(\sigma(n))$ when $q$ is odd. These conditions were next dropped off in \cite{zhao25} where the authors showed that $v_q(\sigma(n)) \leqslant \left \lceil \frac{\log n}{\log q} \right \rceil $ whenever $q \geqslant 3$. Subsequently, Zhao \cite{zhao26} generalized this result to the function $\sigma_k(n)$, establishing the bound
\begin{equation}
   v_q \left( \sigma_k(n) \right) \leqslant \left \lceil \frac{k \log n}{\log q} \right \rceil \label{eq:zhao}
\end{equation}
for all $n \geqslant 2$, $k \geqslant 1$ and primes $q \geqslant 2$. The case $q=2$ was later improved by Cheng and Zhang in \cite{cheng26} in which it is proven that
\begin{equation}
   v_2 \left( \sigma_k (n) \right) = \sum_{\substack{p^{\alpha} \| n \\ p \ne 2 \\ 2 \mid \alpha + 1}} \biggl( v_2(\alpha+1) + v_2 \left( p^k + 1 \right) - 1 \biggr) \label{eq:cheng}
\end{equation}
and from which the authors derived the bound
$$v_2 \left( \sigma_k(n) \right) \leqslant \begin{cases} \left \lfloor \frac{\log n}{\log q} \right \rfloor, & \mathrm{if} \ 2 \mid k \, ; \\ & \\ \left \lceil \frac{\log n}{\log q} \right \rceil, & \mathrm{if} \ 2 \nmid k. \end{cases}$$
The aim of this note is to generalize \eqref{eq:cheng} to every prime $q \geqslant 3$, and then derive an upper bound for $v_q \left( \sigma_k(n) \right)$ which is asymptotically better than \eqref{eq:zhao} when $n$ is large and $k \geqslant q-2$.

\begin{theorem}
\label{th:valuation_sigma_k}
Let $n \in \Z_{\geqslant 2}$, $k \in \Z_{\geqslant 1}$ and $q \geqslant 3$ be a prime number. Then
$$v_q \left( \sigma_k(n)\right) =\sum_{\substack{p^{\alpha} \| n \\ p^{k} \equiv 1 \; (\md q)}} v_q(\alpha+1) + \sum_{\substack{p^{\alpha} \| n \\ p \ne q \\ p^k \not \equiv 1 \; (\md q) \\ \varpi_k \mid \alpha + 1}} \Bigl( v_q(\alpha+1) + v_q(k) + v_q \left( \Phi_{\varpi_1} (p) \right) \Bigr)$$
where $\varpi_1 := \ord_q (p)$, $\varpi_k := \ord_q \bigl(p^k \bigr)$ and $\Phi_m(X)$ is the $m$th cyclotomic polynomial.
\end{theorem}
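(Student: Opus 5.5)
The plan is to use multiplicativity of $\sigma_k$ to reduce to prime powers, and then to compute each local factor with the LTE lemma together with the factorization of $X^{m}-1$ into cyclotomic polynomials. Since $\sigma_k$ is multiplicative,
$$v_q\left(\sigma_k(n)\right)=\sum_{p^\alpha\| n} v_q\bigl(1+p^k+\dots+p^{k\alpha}\bigr),$$
so it is enough to show that each local term $S_p:=v_q\bigl(1+p^k+\dots+p^{k\alpha}\bigr)$ matches the corresponding summand in the statement. We split according to three cases on $p$.

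\emph{Case $p=q$.} The sum $1+q^k+\dots+q^{k\alpha}$ is $\equiv 1 \pmod q$, so $S_q=0$. This agrees with the statement, because $q^k\equiv 0\not\equiv 1 \pmod q$ and the second sum explicitly excludes $p=q$.

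\emph{Case $p^k\equiv 1 \pmod q$.} Write $x=p^k$, so that the local factor equals $(x^{\alpha+1}-1)/(x-1)$. Since $q$ is odd and $q\mid x-1$, the LTE lemma gives $v_q(x^{\alpha+1}-1)=v_q(x-1)+v_q(\alpha+1)$. Hence $S_p=v_q(\alpha+1)$.

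\emph{Case $p\neq q$ and $p^k\not\equiv 1\pmod q$.} Again write $x=p^k$; now $v_q(x-1)=0$, so $S_p=v_q(x^{\alpha+1}-1)$. Since $x$ has order $\varpi_k$ modulo $q$, we get $q\mid x^{\alpha+1}-1$ if and only if $\varpi_k\mid\alpha+1$. So $S_p=0$ unless $\varpi_k\mid \alpha+1$, which accounts for the divisibility condition in the second sum. Now assume $\alpha+1=\varpi_k m$ and put $y=x^{\varpi_k}\equiv 1\pmod q$. LTE gives $v_q(y^m-1)=v_q(y-1)+v_q(m)$. Moreover $v_q(m)=v_q(\alpha+1)$, because $\varpi_k\mid q-1$ is prime to $q$.

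It remains to prove that $v_q(p^{k\varpi_k}-1)=v_q(k)+v_q(\Phi_{\varpi_1}(p))$. Since $p^{k\varpi_k}\equiv1\pmod q$, we have $\varpi_1\mid k\varpi_k$. Applying LTE to $p^{\varpi_1}\equiv 1 \pmod q$ gives
$$v_q(p^{k\varpi_k}-1)=v_q(p^{\varpi_1}-1)+v_q\!\left(\sdfrac{k\varpi_k}{\varpi_1}\right)=v_q(p^{\varpi_1}-1)+v_q(k),$$
where the last equality holds because $\varpi_k$ and $\varpi_1$ both divide $q-1$ and are therefore prime to $q$. Finally, $p^{\varpi_1}-1=\prod_{d\mid\varpi_1}\Phi_d(p)$. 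For each proper divisor $d<\varpi_1$ we have $\Phi_d(p)\mid p^d-1$, and $q\nmid p^d-1$ by minimality of $\varpi_1$. Hence $v_q(p^{\varpi_1}-1)=v_q(\Phi_{\varpi_1}(p))$. Summing the three cases gives the stated formula.

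The only delicate step is this last identification: one has to track the orders carefully so that the factors $\varpi_k$ and $\varpi_1/\gcd$ contribute nothing $q$-adically, and isolate the single cyclotomic factor carrying all the $q$-adic valuation. The rest is a routine application of LTE, which needs $q$ odd.
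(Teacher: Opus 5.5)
Your proof is correct and follows the paper's route. Both arguments use multiplicativity, then LTE on each prime-power factor according to whether $p=q$, $p^k\equiv 1$, or $p^k\not\equiv 1 \pmod q$ (with the order condition $\varpi_k\mid\alpha+1$), then isolate $\Phi_{\varpi_1}(p)$ in $p^{\varpi_1}-1=\prod_{d\mid\varpi_1}\Phi_d(p)$; your single LTE step with exponent $k\varpi_k/\varpi_1$ merges the paper's two subcases ($q\mid p^{\varpi_k}-1$ or not), and your observation $\Phi_d(p)\mid p^d-1$ for $d<\varpi_1$ replaces its appeal to the lemma on $q\mid\Phi_m(n)$, but these are only streamlinings.
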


\begin{exe}
\label{ex:valuation_sigma_k}
With $q=5$ and $k=2$, Theorem~\ref{th:valuation_sigma_k} yields
$$v_5 \left( \sigma_2(n)\right) = \sum_{\substack{p^{\alpha} \| n \\ p \equiv \pm 1 \; (\md 5)}} v_5(\alpha+1) + \sum_{\substack{p^{\alpha} \| n \\ p \equiv \pm 2 \; (\md 5) \\ 2 \mid \alpha + 1}} \Bigl( v_5(\alpha+1) + v_5 \left( p^{2} + 1 \right) \Bigr) .$$
With $q=7$ and $k=9$, we derive
\begin{align*}
   v_7 \left( \sigma_9(n)\right) &= \sum_{\substack{p^{\alpha} \| n \\ p \equiv 1,2,4 \; (\md 7)}} v_7(\alpha+1) + \sum_{\substack{p^{\alpha} \| n \\ p \equiv 3,5 \; (\md 7) \\ 2 \mid \alpha+1}} \Bigl( v_7(\alpha+1) + v_7(p^2-p+1) \Bigr) \\
   & \hspace*{0.5cm} + \sum_{\substack{p^{\alpha} \| n \\ p \equiv 6 \; (\md 7) \\ 2 \mid \alpha+1}} \Bigl( v_7(\alpha+1) + v_7(p+1) \Bigr).
\end{align*}
\end{exe}

\begin{coro}
\label{cor:valuation_sigma_k}
Let $n \in \Z_{\geqslant 3}$, $k \in \Z_{\geqslant 1}$ and $q \geqslant 3$ be a prime number. Then
$$v_q \left( \sigma_k(n)\right) < \frac{M_{q,k} \log \gamma(n)}{\log q} + \frac{(\np{1.385} \log k + \np{1.066})\log n}{\log q \log \log n}+ \frac{4(\log \log \log n + 1)}{\log q}.$$
Here, $M_{q,k} := \max \varphi(d)$ where the maximum runs over the divisors $ d$ of $q-1 $ such that $d \nmid k$, with the convention $M_{q,k} = 0$ if this set is empty. In particular
$$v_q \left( \sigma_k(n)\right) < \frac{(q-2)\log \gamma(n)}{\log q}  + \frac{(\np{1.385} \log k + \np{1.066})\log n}{\log q \log \log n}+ \frac{4(\log \log \log n + 1)}{\log q}.$$
Furthermore, if $v_q(k)=0$, then the term $\np{1.385} \log k$ can be omitted.
\end{coro}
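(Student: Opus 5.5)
The plan is to start from the exact formula of Theorem~\ref{th:valuation_sigma_k} and bound, for each $p^{\alpha}\| n$, the three kinds of contributions separately: the terms $v_q(\alpha+1)$ (from both sums), the terms $v_q(k)$, and the terms $v_q\left(\Phi_{\varpi_1}(p)\right)$. Each is then converted into the corresponding piece of the bound via a classical explicit estimate on $\tau(n)$, $\omega(n)$ and $\sum_{p\mid n}1/p$.

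\emph{Step 1: the $v_q(\alpha+1)$ terms.} Both sums together contribute at most
$$\sum_{p^{\alpha}\| n} v_q(\alpha+1)\leqslant \sum_{p^\alpha \| n}\frac{\log(\alpha+1)}{\log q}=\frac{\log \tau(n)}{\log q}.$$
Nicolas--Robin's explicit bound $\log\tau(n)\leqslant \np{1.5379}\log 2\,\log n/\log\log n$ holds for $n\geqslant 3$, and $\np{1.5379}\log 2<\np{1.066}$, which gives the term $\np{1.066}\log n/(\log q\log\log n)$.

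\emph{Step 2: the $v_q(k)$ terms.} There is at most one such term per prime divisor $p$ of $n$, so they sum to at most $\omega(n)\,v_q(k)\leqslant \omega(n)\log k/\log q$. Robin's bound $\omega(n)\leqslant \np{1.3841}\log n/\log\log n$ for $n\geqslant 3$ then yields $\np{1.385}\log k\log n/(\log q\log\log n)$. If $v_q(k)=0$, this contribution vanishes identically, which justifies the final remark of the statement.

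\emph{Step 3: the cyclotomic terms.} Here $d:=\varpi_1$ divides $q-1$, and $d\nmid k$ because $p^k\not\equiv 1 \pmod q$. Hence $\varphi(d)\leqslant M_{q,k}$, and in particular these terms only occur when the defining set of $M_{q,k}$ is nonempty. I would use $v_q(\Phi_d(p))\leqslant \log \Phi_d(p)/\log q$ together with the M\"obius expression
$$\Phi_d(p)=\prod_{e\mid d}\left(p^e-1\right)^{\mu(d/e)}\leqslant p^{\varphi(d)}\prod_{e\geqslant 1}\left(1-p^{-e}\right)^{-1}.$$
This inequality follows by writing $p^e-1=p^e(1-p^{-e})$, using $\sum_{e\mid d}e\,\mu(d/e)=\varphi(d)$, and dropping the factors with $\mu(d/e)=1$, which are $<1$.

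The point of this inequality is that the correction factor does not depend on $d$. One checks that $\log\prod_{e\geqslant1}(1-p^{-e})^{-1}\leqslant c/p$ with an absolute $c\leqslant 4$; the worst case is $p=2$, where one gets about $2\log 3.46\approx 2.5$. Summing over $p\mid n$ gives
$$\frac{M_{q,k}\log\gamma(n)}{\log q}+\frac{4}{\log q}\sum_{p\mid n}\frac1p .$$

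\emph{Step 4: the sum of reciprocals.} I would bound $\sum_{p\mid n}1/p\leqslant \sum_{p\leqslant p_{\omega(n)}}1/p$, then apply Mertens' theorem in explicit form (Rosser--Schoenfeld) together with $p_{\omega(n)}\ll \log n$. This gives $\sum_{p\mid n}1/p\leqslant \log\log\log n+1$ for $n\geqslant 3$, where small $n$ are checked directly.

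Adding Steps 1--4 gives the main inequality. The ``in particular'' version follows from $M_{q,k}\leqslant \max_{d\mid q-1,\,d\ne 1}\varphi(d)$ when $d=q-1$ is excluded, while the case $d=q-1$ with $d\nmid k$ is handled by $\varphi(q-1)\leqslant q-2$.

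The main obstacle is Step 3 and Step 4: keeping the constant in front of the lower-order term independent of $q$ and $k$. A naive bound such as $\Phi_d(p)\leqslant(p+1)^{\varphi(d)}$ would produce $M_{q,k}\sum_{p\mid n} 1/p$ and spoil the uniformity. This is why I would first try the M\"obius-product bound above, and then the small-$n$ verification of the explicit Mertens-type inequality.
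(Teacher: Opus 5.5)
Your route is the paper's proof in all essentials. You bound $\sum v_q(\alpha+1)\le\log\tau(n)/\log q$ with Nicolas--Robin, the $v_q(k)$ terms via $\omega(n)$ and Robin, and the cyclotomic terms via the M\"obius bound of Lemma~\ref{le:bounds_cyclo} together with $\varphi(\varpi_1)\le M_{q,k}$. Your one variation is a bound $\log\prod_{e\ge1}(1-p^{-e})^{-1}\le c/p$ with $c\approx2.5$, which puts the factor $4$ directly on $\sum_{p\mid n}1/p$. This is slightly cleaner than the paper's detour through $1/p+4/p^2$ and $4\log\zeta(2)$.

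\textbf{The gap is Step~4.} The inequality $\sum_{p\mid n}1/p\le\log\log\log n+1$ is false for $n=3,4,6$:
\begin{itemize}
\item the right-hand side is about $-1.36$, $-0.12$, $0.46$;
\item the left-hand side is $1/3$, $1/2$, $5/6$.
\end{itemize}
So checking small $n$ directly refutes the inequality rather than confirming it. The correct explicit statement, which is the one the paper cites, is $\sum_{p\mid n}1/p<\max(1,\log\log\log n+1)$. As a result, your separate estimate for the cyclotomic part genuinely fails for small $n$. For $n=3$, $q=5$, $k=2$ that part equals $v_5(\Phi_4(3))=v_5(10)=1$, whereas $\bigl(M_{5,2}\log 3+4(\log\log\log 3+1)\bigr)/\log 5<0$.

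\textbf{The repair.} The corollary itself survives, because $\log n/\log\log n$ is large when $\log\log n$ is small, but you must borrow from that term. For $n\in\{3,4,6\}$, compare your combined estimate with the full right-hand side instead of bounding the pieces separately. That is, check
\[
\log\tau(n)+2.5\sum_{p\mid n}1/p<1.066\log n/\log\log n+4(\log\log\log n+1),
\]
which holds numerically: $1.53<7.00$, $2.35<4.05$, $3.47<5.12$. The $\log k$ terms are covered by Robin's bound on $\omega(n)$ as before. For every other $n\ge3$ your Step~4 inequality is true. The paper's chain for $S_2$ has the same defect, since its last absorption step needs $n\ge16$, so this patch is required in either version.

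\textbf{A smaller point, also inherited from the paper.} Take $n=6983776800=2^5\,3^3\,5^2\cdot7\cdot11\cdot13\cdot17\cdot19$. Then $\tau(n)=2304$ and $\log\tau(n)\log\log n/\log n\approx1.06602$. So the Nicolas--Robin constant is $1.53794\ldots$ rather than $1.5379$, and the claim $\log\tau(n)<1.066\log n/\log\log n$ fails at this $n$ by a rounding error. This is harmless for the corollary: the excess is tiny and occurs only for moderate $n$, where your estimates have ample slack (for instance $2.5\sum_{p\mid n}1/p$ versus $4(\log\log\log n+1)$). Still, Step~1 as written needs a remark to that effect.
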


\noindent
Note that $M_{q,k}$ may be much smaller than $\min(q-2,k)$ since the use of the cyclotomic polynomial $\Phi_{\varpi_1}(X)$ may reduce drastically the $q$-adic valuation of the second sum in Theorem~\ref{th:valuation_sigma_k}. For instance, $M_{7,9} = 2$ as can be seen in Example~\ref{ex:valuation_sigma_k}. Also note that Theorem~\ref{th:valuation_sigma_k} implies that, if $p \ne q$, $p^k \not \equiv 1 \; (\md q)$ and $\ord_q \left( p^{k}\right) \mid \alpha + 1$, then $ v_q \left( \sigma_k \left( p^{\alpha}\right) \right)  \geqslant 1 $.

\subsection{Notation}

\noindent
In what follows, $p$ and $q$ are always prime numbers satisfying $p \ne q$ and $q \geqslant 3$. The notation $p^{\alpha} \| n$ means $p^{\alpha} \mid n$ and $p^{\alpha + 1} \nmid n$, so that $v_p(n) = \alpha$. The $\gcd$ of the integers $a$ and $b$ is denoted by $(a,b)$. For all $k \in \Z_{\geqslant 1}$, $\varpi_k := \ord_q \bigl(p^k \bigr)$ and $d_k := (k,\varpi_1) = \Bigl( k, \ord_q(p)\Bigr)$. If $n \geqslant 2$, $\varphi(n)$ is the Euler totient, $\tau(n) := \sum_{d \mid n} 1$ is the number of divisors of $n$, $\omega(n) := \sum_{p \mid n} 1$ is the number of distinct prime divisors of $n$ and $\gamma(n) := \prod_{p \mid n} p$ is the squarefree kernel of $n$. Finally, $\Phi_m(X)$ is the $m$th cyclotomic polynomial.

\section{Proofs}

\subsection{The LTE Lemma}

\noindent
The next result, which seems to appear for the first time in 1878 in the work of Lucas \cite[Section~XIII]{luc78}, will play a key part in the proof of Theorem~\ref{th:valuation_sigma_k}. For a proof, see for instance \cite[Theorem~1]{manea06}.

\begin{lem}[LTE]
\label{le:LTE}
Let $a \ne b \in \Z$, $q \geqslant 3$ prime such that $q \nmid ab$ and $q \mid a-b$, and $m \in \Z_{\geqslant 1}$. Then
   $$v_q \left( a^{m} - b^{m} \right) = v_q(a-b) + v_q(m).$$
\end{lem}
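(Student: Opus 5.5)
The plan is the classical two-step argument: first handle exponents $m$ coprime to $q$, then the exponent $m=q$, and finally combine them by induction. Write $m = q^{s} t$ with $s = v_q(m)$ and $q \nmid t$. I would begin by disposing of the degenerate case. If $a^m = b^m$ with $a \ne b$, then $a=-b$, so $q \mid a-b = 2a$. Since $q$ is odd this forces $q \mid a$, contradicting $q \nmid ab$. Hence every $a^{j}-b^{j}$ with $j \geqslant 1$ is nonzero, and all valuations below are finite.

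\textbf{Step 1 ($q \nmid t$).} I would use the factorization
$$a^t - b^t = (a-b)\sum_{i=0}^{t-1} a^{t-1-i} b^{i}.$$
Since $a \equiv b \pmod q$, the sum is congruent to $t\, b^{t-1}$ modulo $q$. This is nonzero modulo $q$ because $q \nmid t$ and $q \nmid b$. Therefore $v_q(a^t - b^t) = v_q(a-b)$.

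\textbf{Step 2 ($m = q$).} This is the only place where $q \geqslant 3$ enters, so I expect it to be the main point to get right. Put $x := a-b$, so $r := v_q(x) \geqslant 1$, and set $S := \sum_{i=0}^{q-1} a^{q-1-i} b^{i}$. Expanding $a^{j} = (b+x)^{j} \equiv b^{j} + j b^{j-1} x \pmod{x^2}$ gives
$$S \equiv q\, b^{q-1} + x\, b^{q-2} \sum_{i=0}^{q-1} (q-1-i) = q\, b^{q-1} + x\, b^{q-2}\, q\, \frac{q-1}{2} \pmod{x^{2}}.$$
Because $q$ is odd, $(q-1)/2$ is an integer, so the second term is divisible by $q x$ and hence by $q^{2}$. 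Also $q^2 \mid x^2$. Thus $S \equiv q\, b^{q-1} \pmod{q^{2}}$, so $v_q(S) = 1$ and $v_q(a^q - b^q) = v_q(a-b) + 1$.

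\textbf{Step 3 (induction).} The pair $(a^{q}, b^{q})$ again satisfies the hypotheses: $q \nmid a^q b^q$, $q \mid a^q - b^q$, and $a^q \ne b^q$ by the remark at the start. Applying Step 2 $s$ times gives
$$v_q\bigl(a^{q^s} - b^{q^s}\bigr) = v_q(a-b) + s.$$
I would then apply Step 1 to the pair $\bigl(a^{q^s}, b^{q^s}\bigr)$ with exponent $t$. This gives $v_q(a^m - b^m) = v_q(a-b) + s = v_q(a-b) + v_q(m)$, which is the claim.
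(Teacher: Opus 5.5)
Your proof is correct and complete. The paper gives no proof of its own for this lemma; it only refers to Lucas and to Manea's Theorem~1. Your argument is the standard one found in such references:
\begin{itemize}
\item for $q \nmid t$, the cofactor is $\equiv t\,b^{t-1} \pmod q$;
\item for exponent $q$, the expansion modulo $x^2$ gives $S \equiv q\,b^{q-1} \pmod{q^2}$, because $q(q-1)/2$ is divisible by $q$ when $q$ is odd;
\item induction on $v_q(m)$ then finishes the proof.
\end{itemize}

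One small imprecision, which does not affect the argument: Step~2 is not the only place where $q \geqslant 3$ is used. Your opening nondegeneracy remark also needs $q$ odd; for $q=2$, $a=1$, $b=-1$, $m=2$ one gets $a^m-b^m=0$.
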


\subsection{Tools for values of cyclotomic polynomials}

\noindent
The following lemma is quite well-known and may follow from the fact that $\Phi_m(X)$ is a reciprocal polynomial for instance. We provide here a slightly different proof.

\begin{lem}
\label{le:bounds_cyclo}
Let $n \in \Z_{\geqslant 2}$ and $m \in \Z_{\geqslant 1}$. Then
$$\prod_{k=1}^{\infty} \left( 1 - \sdfrac{1}{n^k}\right) n^{\varphi(m)} \leqslant \Phi_m(n) \leqslant \prod_{k=1}^{\infty} \left( 1 - \sdfrac{1}{n^k}\right)^{-1} n^{\varphi(m)}.$$ 
\end{lem}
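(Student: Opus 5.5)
We aim to prove the two-sided bound on $\Phi_m(n)$ through the Möbius product formula $\Phi_m(X) = \prod_{d \mid m} \bigl(X^d - 1\bigr)^{\mu(m/d)}$, evaluated at $X = n \geqslant 2$. Since $n^d - 1 > 0$ for every $d$, we may write $n^d - 1 = n^d\bigl(1 - n^{-d}\bigr)$. Substituting this into the product gives
$$\Phi_m(n) = n^{\sum_{d \mid m} d\,\mu(m/d)} \prod_{d \mid m} \bigl(1 - n^{-d}\bigr)^{\mu(m/d)} = n^{\varphi(m)} \prod_{d \mid m} \bigl(1 - n^{-d}\bigr)^{\mu(m/d)},$$
because $\sum_{d \mid m} d\,\mu(m/d) = \varphi(m)$. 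The problem is then reduced to bounding the finite product $P := \prod_{d \mid m} \bigl(1 - n^{-d}\bigr)^{\mu(m/d)}$ above and below.

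Every factor $1 - n^{-d}$ lies in $(0,1)$. Consider first the divisors with $\mu(m/d) = 1$. Each such factor is at most $1$ and at least $1 - n^{-d}$, so their product lies between $\prod_{d \mid m,\ \mu(m/d)=1} \bigl(1 - n^{-d}\bigr)$ and $1$.

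Now consider the divisors with $\mu(m/d) = -1$. Each contributes $\bigl(1 - n^{-d}\bigr)^{-1} \geqslant 1$, so their product lies between $1$ and $\prod_{d \mid m,\ \mu(m/d)=-1} \bigl(1 - n^{-d}\bigr)^{-1}$. Divisors with $\mu(m/d) = 0$ contribute nothing.

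Combining the two groups gives
$$\prod_{d \mid m} \bigl(1 - n^{-d}\bigr) \leqslant P \leqslant \prod_{d \mid m} \bigl(1 - n^{-d}\bigr)^{-1}.$$
The divisors of $m$ are distinct positive integers, and every factor lies in $(0,1)$. Extending the product over all $k \geqslant 1$ therefore only decreases the lower bound and increases the upper bound. This yields $\prod_{k \geqslant 1}\bigl(1 - n^{-k}\bigr) \leqslant P \leqslant \prod_{k \geqslant 1}\bigl(1 - n^{-k}\bigr)^{-1}$, which is exactly the claim. The infinite product converges to a positive limit since $\sum_k n^{-k} < \infty$.

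No real obstacle is expected. The only points to check are the identity $\sum_{d\mid m} d\,\mu(m/d)=\varphi(m)$, which is standard Möbius inversion of $\sum_{d\mid m}\varphi(d)=m$, and the positivity of all factors, which justifies manipulating the inequalities multiplicatively. The case $m=1$, where $\Phi_1(n)=n-1$, is covered by the same computation.
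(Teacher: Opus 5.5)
Your proof is correct and follows the paper's argument: the M\"obius product formula for $\Phi_m$, factoring out $n^{d\mu(m/d)}$, and using $\mathrm{id}\star\mu=\varphi$ to reduce to bounding the finite product $\prod_{d\mid m}(1-n^{-d})^{\mu(m/d)}$. Your split by the sign of $\mu(m/d)$, followed by extending the product to all $k\geqslant 1$, spells out the step the paper leaves as ``the asserted bounds follow easily.''
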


\begin{proof} 
By M\"{o}bius inversion
$$\Phi_m(n) = \prod_{k \mid m} \left(n^{k} - 1 \right)^{\mu(m/k)} = \prod_{k \mid m} n^{k\mu(m/k)} \ \prod_{k \mid m}\left(1 - \sdfrac{1}{n^k} \right)^{\mu(m/k)}$$
and
$$\log \left( \prod_{k \mid m} n^{k\mu(m/k)}\right) = \log n \sum_{k \mid m} k \mu \left( \frac{m}{k} \right)  = \varphi(m) \log n$$
where we used the Dirichlet convolution identity $\mathrm{id} \star \mu = \varphi$, so that
$$\Phi_m(n) = n^{\varphi(m)} \prod_{k \mid m} \left( 1 - \sdfrac{1}{n^k}\right)^{\mu(m/k)}$$
and the asserted bounds follow easily.
\end{proof}

\noindent
The next result is well-known. See \cite[Lemma 2.9]{was82} for instance.

\begin{lem}
\label{le:cyclo_val}
Let $q$ be a prime. Then for all $n \in \Z$ and $m \in \Z_{\geqslant 1}$ such that $q \nmid mn$, we have
$$q \mid \Phi_m(n) \iff m = \ord_q(n).$$
\end{lem}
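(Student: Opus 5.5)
The plan is to work with the factorization $X^m-1=\prod_{d\mid m}\Phi_d(X)$ in $\Z[X]$ and reduce it modulo $q$. The hypothesis $q\nmid m$ makes $X^m-1$ separable over $\mathbb{F}_q$. The hypothesis $q\nmid n$ makes $n$ a unit modulo $q$, so $\ord_q(n)$ is defined. The argument has three steps.

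\textbf{Step 1: a double-root lemma.} Suppose $d\ne d'$ are two divisors of $m$ with $q\mid\Phi_d(n)$ and $q\mid\Phi_{d'}(n)$. Then $\bar n\in\mathbb{F}_q$ is a root of $\Phi_d\Phi_{d'}$, and this product divides $X^m-1$. Hence $\bar n$ is at least a double root of $X^m-1$ over $\mathbb{F}_q$. In particular it is a root of the derivative $mX^{m-1}$. But $\bar m\neq 0$ and $\bar n\neq 0$, so this is impossible. Conclusion: for $d\mid m$, at most one $\Phi_d(n)$ is divisible by $q$.

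\textbf{Step 2: the direction ($\Rightarrow$).} Assume $q\mid\Phi_m(n)$. Since $\Phi_m\mid X^m-1$, we get $n^m\equiv1\pmod q$, so $e:=\ord_q(n)$ divides $m$. Suppose $e<m$. From $q\mid n^e-1=\prod_{d\mid e}\Phi_d(n)$ and the primality of $q$, some $d\mid e$ satisfies $q\mid\Phi_d(n)$. This $d$ divides $m$ and $d\le e<m$, so $d\ne m$. Thus $\Phi_d(n)$ and $\Phi_m(n)$ are two distinct factors divisible by $q$, contradicting Step 1. Hence $e=m$.

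\textbf{Step 3: the direction ($\Leftarrow$).} Assume $m=\ord_q(n)$. Then $q\mid n^m-1=\prod_{d\mid m}\Phi_d(n)$, so $q\mid\Phi_d(n)$ for some $d\mid m$. Since $q\nmid d$ and $q\nmid n$, Step 2 applies to $d$ in place of $m$ and gives $d=\ord_q(n)=m$. Hence $q\mid\Phi_m(n)$.

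The only delicate point is Step 1, and it is exactly where the hypothesis $q\nmid m$ is used. Without it the separability of $X^m-1$ fails, and the statement itself is false: for example, $q\mid\Phi_{q}(1)=q$ although $\ord_q(1)=1$. Everything else is the factorization of $X^m-1$ and the primality of $q$.
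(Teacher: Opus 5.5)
Your proof is correct. The paper gives no argument for this lemma: it only refers to Washington's book (Lemma~2.9). Your proposal therefore does something the paper does not, namely make the note self-contained, using the standard separability argument for $X^m-1$ over $\mathbb{F}_q$.

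You also correctly locate where $q\nmid m$ matters. It is used only in Step~1, that is, only to exclude $q\mid\Phi_m(n)$ when $m$ is a proper multiple of $\ord_q(n)$. Step~3 does not need it: $q\mid\Phi_d(n)$ gives $\ord_q(n)\mid d$, and together with $d\mid m=\ord_q(n)$ this forces $d=m$ directly.

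This matters for the paper. Its only use of the lemma, in the proof of Theorem~\ref{th:valuation_sigma_k}, concerns divisors $d$ of $\varpi_1=\ord_q(p)$. There the required fact, $q\nmid\Phi_d(p)$ for $d<\varpi_1$, already follows from $\Phi_d(p)\mid p^d-1$ and $q\nmid p^d-1$. So in that application neither the citation nor your separability step is actually needed.
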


\subsection{Proof of Theorem~\ref{th:valuation_sigma_k}}

\noindent
Write $n = q^{\nu} a b$ with $\nu \in \Z_{\geqslant 0}$, $(a,b)=1$ such that $p \mid a \Longrightarrow p^{k} \equiv 1 \; (\md q)$ and $p \mid b \Longrightarrow \left( p \ne q \right.$ and $\left. p^{k} \not \equiv 1 \; (\md q) \right)$. Note that $\sigma_k \bigl( q^{\nu} \bigr) = 1 + q^k  + \dotsb + q^{k \nu} \equiv 1 \; (\md q)$, and hence $v_q \left( \sigma_k \bigl( q^{\nu} \bigr) \right) =0$. Thus
\begin{equation}
   v_q \left( \sigma_k(n)\right) = v_q \left( \sigma_k(a)\right) + v_q \left( \sigma_k(b)\right). \label{eq:sigma_k}
\end{equation}
If $q \geqslant 3$, $q \ne p$ and $q \mid p^k - 1$, we apply Lemma~\ref{le:LTE} with $a=p^k$, $b=1$ et $m= \alpha +1$, yielding
$$v_q \left( \sigma_k \left( p^{\alpha}\right) \right) = v_q \left( \frac{p^{k(\alpha+1)}-1}{p^k-1} \right)= v_q \left( p^{k(\alpha+1)} - 1 \right) - v_q \left( p^{k} - 1 \right)= v_q(\alpha + 1)$$
so that
\begin{equation}
   v_q \left( \sigma_k(a)\right) = \sum_{p^{\alpha} \| a} v_q(\alpha+1) =\sum_{\substack{p^{\alpha} \| n \\ p^{k} \equiv 1 \; (\md q)}} v_q(\alpha+1). \label{eq:sigma_a}
\end{equation}
To evaluate $v_q \left( \sigma_k(b)\right)$, we assume throughout that $p^k \not \equiv 1 \; (\md q)$ and first note that, for all $p^{\alpha} \| b$, we have
$$v_q \left( \sigma_k \bigl( p^{\alpha} \bigr) \right) = v_q \left( p^{k(\alpha+1)}-1 \right) $$
since $q \nmid p^{k}-1$. Also notice that this latter condition implies $\varpi_1 \geqslant 2$ and $\varpi_k \geqslant 2$.
\begin{enumerate}
   \item[\scriptsize $\triangleright$] If $ \varpi_k \nmid \alpha + 1 $, the euclidean division of $\alpha + 1$ by $\varpi_k$ yields $\alpha + 1 = h \varpi_k + r$ with $1 \leqslant r < \varpi_k$, so that
   $$p^{k(\alpha+1)}-1 = \left( p ^{k \varpi_k} \right)^{h} \times p^{rk} - 1 \equiv p^{rk} - 1 \not \equiv 0 \; (\md q)$$
   otherwise we would have $\varpi_k \leqslant r$ since $r \geqslant 1$, which would contradict the inequality $r < \varpi_k$. We infer that, in this case, we have
   $$v_q \left( \sigma_k\bigl( p^{\alpha} \bigr)\right) = 0.$$
   \item[\scriptsize $\triangleright$] If $ \varpi_k \mid \alpha + 1 $, write $\alpha + 1 = h \varpi_k$ with $h \in \Z_{\geqslant 1}$. Since $p^{k \varpi_k} \equiv 1 \; (\md q)$, Lemma~\ref{le:LTE} yields
   \begin{align*}
      v_q \left( \sigma_k\bigl( p^{\alpha} \bigr)\right) &= v_q \left( p^{k h \varpi_k} - 1 \right) = v_q \left( p^{k  \varpi_k} - 1 \right) + v_q(h) \\
      &= v_q \left( p^{k \varpi_k} - 1 \right) + v_q(\alpha+1) - v_q(\varpi_k) \\
      &= v_q \left( p^{k \varpi_k} - 1 \right) + v_q(\alpha+1)
   \end{align*}
   where we used $\varpi_k \mid q-1$ and hence $ v_q(\varpi_k) = 0 $. Therefore
   \begin{align*}
      v_q \left( \sigma_k(b)\right) &= \sum_{\substack{p^{\alpha} \| b \\ \varpi_k \mid \alpha + 1}} \left( v_q(\alpha+1) + v_q \left( p^{k \varpi_k} - 1 \right) \right) \\
      & = \sum_{\substack{p^{\alpha} \| n \\ p \ne  q \\ p^{k} \not \equiv 1 \; (\md q) \\ \varpi_k  \mid \alpha + 1}} \left( v_q(\alpha+1) + v_q \left( p^{k \varpi_k} - 1 \right) \right).
   \end{align*}
   We now distinguish between two cases:
   \begin{enumerate}[(i)]
      \item If $q \mid p^{\varpi_k} - 1$, Lemma~\ref{le:LTE} implies that
      $$v_q \left( p^{k \varpi_k} - 1 \right) = v_q \left( p^{\varpi_k} - 1 \right) + v_q(k).$$
      Also note that, since $1 \equiv p^{k \varpi_k} \equiv p^{\varpi_k} \; (\md q)$, we derive $\varpi_1 \mid \varpi_k (k-1)$, and since  $\varpi_1 = \varpi_k d_k $ where $d_k := \left( \varpi_1,k\right)$, we get $d_k \mid k-1$. Since $d_k \mid k$, we obtain $d_k = 1$ and then $\varpi_k = \varpi_1$. Hence
      $$v_q \left( p^{k \varpi_k} - 1 \right) = v_q \left( p^{\varpi_1} - 1 \right) + v_q(k).$$
      \item Assume $q \nmid p^{\varpi_k} - 1$. Using again $\varpi_1 = \varpi_k d_k $, we derive $ k \varpi_k =  \frac{k \varpi_1}{d_k} := m \varpi_1$, where $m:= \frac{k}{d_k}$. Since $q \mid p^{\varpi_1} - 1$, Lemma~\ref{le:LTE} yields
      \begin{align*}
         v_q \left( p^{k \varpi_k} - 1 \right) &= v_q \left(p^{m \varpi_1}  - 1\right)  \\
         &= v_q \left( p^{\varpi_1} - 1 \right) + v_q(m) \\
         &= v_q \left( p^{\varpi_1} - 1 \right) + v_q (k) - v_q (d_k).
      \end{align*}
   Furthermore, since $\varpi_1 \mid q-1$, we have $q \nmid \varpi_1$, so that $ v_q (d_k) = 0 $.
   \end{enumerate}
   Hence
   $$v_q \left( \sigma_k(b)\right) = \sum_{\substack{p^{\alpha} \| n \\ p \ne  q \\ p^{k} \not \equiv 1 \; (\md q) \\ \varpi_k  \mid \alpha + 1}} \left( v_q(\alpha+1) + v_q(k) +v_q \left( p^{\varpi_1} - 1 \right) \right).$$
   Now
   $$v_q \left( p^{\varpi_1} - 1 \right) = v_q \left( \prod_{d \mid \varpi_1} \Phi_d (p) \right) = \sum_{d \mid \varpi_1} v_q \left( \Phi_d (p) \right)$$
   and since $q \ne p$ and $q \nmid d$ for all $d \mid \varpi_1$ because $\varpi_1 \mid q-1$, Lemma~\ref{le:cyclo_val} yields $v_q \left( \Phi_d (p) \right) \ne 0 \iff d= \ord_q(p)= \varpi_1$, so that
   $$v_q \left( p^{\varpi_1} - 1 \right) = v_q \left( \Phi_{\varpi_1} (p) \right).$$
\end{enumerate}
   Finally, we get
   \begin{equation}
      v_q \left( \sigma_k(b)\right) = \sum_{\substack{p^{\alpha} \| n \\ p \ne q \\ p^k \not \equiv 1 \; (\md q) \\ \varpi_k \mid \alpha + 1}} \left( v_q(\alpha+1) + v_q(k) + v_q \left( \Phi_{\varpi_1} (p) \right)\right) \label{eq:sigma_b}
   \end{equation}     
   and we complete the proof by reporting \eqref{eq:sigma_a} and \eqref{eq:sigma_b} in \eqref{eq:sigma_k}. \qedhere
   
\subsection{Proof of Corollary~\ref{cor:valuation_sigma_k}}

\noindent
First note that, when $p^k \equiv 1 \, (\md q)$, then $\varpi_k = 1$, so that Theorem~\ref{th:valuation_sigma_k} may be restated as
$$v_q \left( \sigma_k(n)\right) = \sum_{\substack{p^{\alpha} \| n \\ p \ne q \\ \varpi_k \mid \alpha + 1}} v_q(\alpha+1)  + \sum_{\substack{p^{\alpha} \| n \\ p \ne q \\ p^k \not \equiv 1 \; (\md q) \\ \varpi_k \mid \alpha + 1}} \Bigl( v_q(k) +  v_q \left( \Phi_{\varpi_1} (p) \right) \Bigr)$$
and therefore
\begin{align}
   v_q \left( \sigma_k(n)\right) & \leqslant \sum_{\substack{p^{\alpha} \| n \\ p \ne q \\ \varpi_k \mid \alpha + 1}} \left( v_q(\alpha+1) + v_q(k) \right) + \sum_{\substack{p^{\alpha} \| n \\ p \ne q \\ p^k \not \equiv 1 \; (\md q) \\ \varpi_k \mid \alpha + 1}} v_q \left( \Phi_{\varpi_1} (p) \right) \label{eq:th_1_new} \\
   & := S_1+S_2, \notag
\end{align}
say.
\begin{enumerate}
   \item[\scriptsize $\triangleright$] The sum $S_1$ does not exceed
$$S_1 \leqslant \frac{1}{\log q} \left( \sum_{p^{\alpha} \| n} \log (\alpha + 1) + \log k \sum_{p \mid n} 1 \right) = \frac{\log \tau(n) + \omega(n) \log k}{\log q}$$
and the bounds \cite[Th\'{e}or\`{e}me~1]{nic83} and \cite[Th\'{e}or\`{e}me~11]{rob83} imply that
\begin{equation}
   S_1 < \frac{\left( \np{1.385} \log k + \np{1.066} \right) \log n}{\log q \log \log n}. \label{eq:maj_sum_1}
\end{equation}
   \item[\scriptsize $\triangleright$] It remains to estimate $S_2$. Let us first note that the condition $p^k \not \equiv 1 \; (\md q)$ is equivalent to $\varpi_1 \mid q-1$ and $\varpi_1 \nmid k$. Also recall that $M_{q,k} = \max \varphi(d)$ where the maximum is taken over the divisors $ d$ of $q-1 $ such that $d \nmid k$, with the convention $M_{q,k} = 0$ if this set is empty. Hence, in view of the previous remark, we get the bound $\varphi (\varpi_1) \leqslant M_{q,k}$ for all primes $p \mid n$ satisfying $p^k \not \equiv 1 \; (\md q)$.
   
   Using Lemma~\ref{le:bounds_cyclo}, we derive for $n \geqslant 3$
   \begin{align}
      S_2 & \leqslant \frac{1}{\log q} \sum_{\substack{p \mid n \\ p^k \not \equiv 1 \; (\md q)}} \log \left( \Phi_{\varpi_1} (p) \right) \notag \\
      & \leqslant \frac{1}{\log q} \sum_{\substack{p \mid n \\ p^k \not \equiv 1 \; (\md q)}} \Biggl( \log \left( p^{\varphi(\varpi_1)}\right) - \sum_{k=1}^{\infty} \log \left( 1 - \frac{1}{p^k}\right) \Biggr) \notag \\
      & \leqslant \frac{1}{\log q} \log \left( \prod_{\substack{p \mid n \\ p^k \not \equiv 1 \; (\md q)}} p^{\varphi(\varpi_1)} \right) + \frac{1}{\log q} \sum_{p \mid n} \sum_{k=1}^{\infty} \left( \frac{1}{p^k} + \frac{1}{p^{2k}}\right) \notag \\
      & \leqslant \frac{1}{\log q} \log \left( \prod_{\substack{p \mid n \\ p^k \not \equiv 1 \; (\md q)}} p^{M_{q,k}} \right) + \frac{1}{\log q} \sum_{p \mid n} \left( \frac{1}{p-1} + \frac{1}{p^2-1} \right)  \notag \\
      &< \frac{M_{q,k}}{\log q} \log \left( \prod_{p \mid n} p \right) + \frac{1}{\log q} \sum_{p \mid n} \left( \frac{1}{p} + \frac{4}{p^2} \right) \notag \\
      & \leqslant \frac{M_{q,k} \log \gamma(n)}{\log q} + \frac{1}{\log q} \left( \log \log \log n + 2 + 4 \log \zeta(2) \right) \notag \\
      & < \frac{M_{q,k} \log \gamma(n)}{\log q}  + \frac{4(\log \log \log n + 1)}{\log q} \label{eq:maj_sum_2}
   \end{align}
\end{enumerate}
where we used \cite[Exercise 61 p. 337]{bor20} stating that $\sum_{p \mid n} 1/p < \max \left( 1, \log \log \log n + 1 \right) $ when $n \geqslant 3$. The first inequality in Corollary~\ref{cor:valuation_sigma_k} follows from inserting \eqref{eq:maj_sum_1} and~\eqref{eq:maj_sum_2} into \eqref{eq:th_1_new}. The second inequality comes from the well known bounds $\varphi(\varpi_1) \leqslant \varpi_1 - 1 \leqslant q-2$, since $\varpi_1 \geqslant 2$, implying that $M_{q,k} \leqslant q-2$. \qedhere

\end{document}